\newtheorem{theorem}{Theorem}
\newtheorem{lemma}{Lemma}
\newtheorem*{example}{Example}
\newtheorem{conjecture}{Conjecture}
\renewcommand*\backref[1]{}
\renewcommand*\backrefalt[4]{ \ifcase #1 \or (cited on page #2) \else (cited on pages #2) \fi}
\newcommand{\be}{\begin{equation}}
\newcommand{\ee}{\end{equation}}
\newcommand{\bea}{\begin{eqnarray}}
\newcommand{\eea}{\end{eqnarray}}
\newcommand{\vs}{\vspace{0.5cm}}
\def\XXint#1#2#3{{\setbox0=\hbox{$#1{#2#3}{\int}$ }
\vcenter{\hbox{$#2#3$ }}\kern-.6\wd0}}
\begin{document}

\title[Solvmanifolds with complex commutator]{On solvmanifolds with complex commutator and constant holomorphic sectional curvature}

\author{Xin Huang}
\address{Xin Huang. School of Mathematical Sciences, Chongqing Normal University, Chongqing 401331, China}
\email{{1772503556@qq.com}}\thanks{The corresponding author Zheng is partially supported by NSFC grants 12141101 and 12471039,  by Chongqing Normal University grant 24XLB026, and is supported by the 111 Project D21024.}

\author{Fangyang Zheng}
\address{Fangyang Zheng. School of Mathematical Sciences, Chongqing Normal University, Chongqing 401331, China}
\email{20190045@cqnu.edu.cn; franciszheng@yahoo.com} \thanks{}

\subjclass[2020]{53C55 (primary)}
\keywords{Hermitian manifolds, Chern connection, holomorphic sectional curvature, Hermitian Lie algebras, stable commutator}

\begin{abstract}
An old open question in non-K\"ahler geometry predicts that any compact Hermitian manifold with constant holomorphic sectional curvature must be K\"ahler or Chern flat.  The conjecture is known to be true in dimension $2$ due to the work by Balas-Gauduchon and Apostolov-Davidov-Muskarov in the 1980s and 1990s,  but is still open in dimensions $3$ or higher, except in several special cases. The difficulty in this quest for `Hermitian space forms' is largely due to the algebraic complicity or lack of symmetry for the curvature tensor of a general Hermitian metric. In this article, we confirm the conjecture for all solvmanifolds with complex commutator, extending earlier result on nilmanifolds by Li and the second named author. 
\end{abstract}

\maketitle

\tableofcontents

\section{Introduction}\label{intro}

The simplest kind of spaces in differential geometry are those with constant curvature. In the Riemannian case, complete Riemannian manifolds with constant {\em sectional curvature} are called (real) {\em space forms,} and it is well-known that their universal covers are either the sphere $S^n$, or the Euclidean space ${\mathbb R}^n$, or the hyperbolic space ${\mathbb H}^n$, equipped with the (scaling of the) standard metrics. In the complex case, a K\"ahler metric can no longer have constant sectional curvature unless it is flat, so instead {\em complex space forms} mean complete K\"ahler manifolds with constant {\em holomorphic sectional curvature}. Their universal covers are known to be either the complex projective space  ${\mathbb C}{\mathbb P}^n$, or the complex Euclidean space ${\mathbb C}^n$, or the complex hyperbolic space ${\mathbb C}{\mathbb H}^n$, equipped with the (scaling of the) standard metrics. It is a natural question to wonder what happens if we drop the K\"ahlerness assumption, namely, what kind of complete Hermitian manifolds can have constant holomorphic sectional curvature? In this aspect, the following is a long-standing conjecture in complex geometry:

\begin{conjecture} \label{conj1}
If the Chern connection of a compact Hermitian manifold has constant holomorphic sectional curvature, then it must be either K\"ahler (hence a complex space form) or Chern flat.
\end{conjecture}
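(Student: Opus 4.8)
The plan is to attack Conjecture~\ref{conj1} through the curvature identities forced by constant holomorphic sectional curvature, isolating the precise algebraic--analytic obstruction that separates the K\"ahler space forms from the Chern flat examples. First I would record the polarized form of the hypothesis. Working in a local unitary frame and writing $R_{i\bar jk\bar l}$ for the Chern curvature, the assumption that $R(X,\bar X,X,\bar X)=c\,|X|^4$ for every $(1,0)$-vector $X$ is equivalent, after polarization, to the symmetrized identity
\be
R_{i\bar jk\bar l}+R_{i\bar lk\bar j}+R_{k\bar ji\bar l}+R_{k\bar li\bar j}=2c\big(g_{i\bar j}g_{k\bar l}+g_{i\bar l}g_{k\bar j}\big).
\ee
In the K\"ahler case the Chern curvature enjoys the exchange symmetries $R_{i\bar jk\bar l}=R_{k\bar ji\bar l}=R_{i\bar lk\bar j}$, so this identity collapses to $R_{i\bar jk\bar l}=c(g_{i\bar j}g_{k\bar l}+g_{i\bar l}g_{k\bar j})$ and the manifold is a complex space form. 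The whole difficulty is that for a general Hermitian metric these exchange symmetries fail, and their failure is governed by the Chern torsion $T$ and its covariant derivatives through the Bianchi identities.

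Next I would introduce the defect tensor $D_{i\bar jk\bar l}:=R_{i\bar jk\bar l}-c(g_{i\bar j}g_{k\bar l}+g_{i\bar l}g_{k\bar j})$ and use the commutation formulas for the Chern connection to express the antisymmetrizations $D_{i\bar jk\bar l}-D_{k\bar ji\bar l}$ and $D_{i\bar jk\bar l}-D_{i\bar lk\bar j}$ as explicit quadratic-plus-derivative expressions in $T$. The polarized identity asserts exactly that the fully symmetric part of $D$ vanishes; the task is then to force the remaining, torsion-controlled part to vanish as well, or else to force $R$ itself to vanish. The intended mechanism is global: on a compact manifold one integrates suitable scalar invariants built from $D$ and $T$ --- for instance combinations of the two Chern Ricci tensors and the Chern scalar curvatures together with $|T|^2$ and $|\nabla T|^2$ --- and uses Stokes' theorem on the associated $(n,n-1)$-forms, in the spirit of Gauduchon's torsion identities, to obtain a Bochner-type dichotomy between $T\equiv 0$ and $R\equiv 0$.

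The hard part, and the reason Conjecture~\ref{conj1} remains open in dimension three and higher, is precisely this last step: because the Chern curvature lacks the K\"ahler symmetries, the hypothesis controls only the symmetric part of $R$, and the surviving torsion terms are not sign-definite, so no single integral identity is known to close the argument in full generality. In the absence of such a global analytic mechanism, the realistic route --- the one this paper takes --- is to restrict to classes where the problem linearizes into finitely many algebraic equations. On a solvmanifold carrying a left-invariant Hermitian structure the curvature and torsion are constant on the Lie algebra, so the integral identities above become algebraic identities; under the additional hypothesis that the commutator ideal is complex (that is, $J$-invariant) one can adapt a frame to the structure constants, solve the resulting system, and read off that the only solutions are the K\"ahler and the Chern flat ones. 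I would therefore carry out the general polarization and torsion bookkeeping first, and then specialize to the solvable Lie-algebra setting, which reduces the outstanding analytic obstruction to a tractable algebraic classification.
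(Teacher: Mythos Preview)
The statement you are addressing is labeled a \emph{conjecture} in the paper, and the paper does not prove it: it explicitly records that Conjecture~\ref{conj1} is open in dimensions $n\geq 3$ and establishes only the special case of solvmanifolds with $J$-invariant commutator (Theorem~\ref{thm}). So there is no ``paper's own proof'' of the full statement to compare against, and your proposal is, as you yourself concede in the third paragraph, not a proof of the conjecture either. The Bochner/Stokes program you outline for general compact Hermitian manifolds is precisely the approach that is known not to close, because the surviving torsion terms in the integrated identities are not sign-definite; writing down the defect tensor $D_{i\bar jk\bar l}$ and invoking ``suitable scalar invariants'' does not advance beyond the existing literature.

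Your final paragraph does land on the paper's actual strategy --- specialize to a left-invariant Hermitian structure on a solvable Lie group with complex commutator, where everything becomes algebraic --- but the sketch there is too coarse to count as a proof plan. The paper's argument hinges on a specific \emph{admissible frame}: one adapted so that $\mathfrak g'$ is spanned by the first $r$ complex directions and, crucially, so that Salamon's theorem on nilpotent Lie algebras forces the triangular vanishing pattern (\ref{eq:res2}) on the structure constants of $\mathfrak g'$. With that frame in hand the paper first shows $c=0$ by comparing $R_{r\bar r r\bar r}\geq 0$ against $R_{\alpha\bar\alpha\alpha\bar\alpha}\leq 0$ (Lemma~\ref{lemmac=0}), and then runs a downward induction on the index $i$ inside $\{1,\dots,r\}$ (Lemma~\ref{lemmagoal}) to kill all the $D^j_{ik}$ and $D^{\alpha}_{ij}$, after which the curvature formula (\ref{curvature}) vanishes term by term. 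None of these concrete mechanisms --- the Salamon triangularization, the sign comparison forcing $c=0$, or the inductive elimination of the $D$-components --- appears in your plan, and they are exactly what makes the algebraic system tractable rather than merely finite.
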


Let us give a few quick remarks. First of all, if one replaces the Chern connection by Levi-Civita, Bismut, or other Gauduchon connections in the conjecture, then there are corresponding conjectures/questions. See for instance \cite{CN, ChenZ, ChenZ1} and the references therein for more discussions. Secondly, the compactness requirement in the assumption is necessary, without it there are counterexamples. Thirdly, by the classic theorem of Boothby \cite{Boothby}, compact Chern flat manifolds are exactly compact quotients of complex Lie groups equipped with left-invariant metrics. They are in general non-K\"ahler except when the Lie group is abelian (so the manifold is a finite undercover of a flat complex torus), and there are plenty of such examples in complex dimensions $3$ or higher.  

Conjecture \ref{conj1} is known to be true in dimension $2$. When the constant holomorphic sectional curvature is zero or negative, it was proved by Balas and Gauduchon  in 1985 (\cite{Balas, BG}). When the constant is positive, it was due to Apostolov, Davidov, and Muskarov \cite{ADM} in 1996. For Levi-Civita connection, the $n=2$ case was due to Sato and Sekigawa \cite{SatoSeki} in 1990 for the constant zero or negative case, and the constant positive case was again due to \cite{ADM}. The $n=2$ case for Bismut connection was due to Chen and the second named author in \cite{ChenZ}, while for Gauduchon connections it was due to Chen and Nie in \cite{CN}.  

In dimensions $3$ or higher, Conjecture \ref{conj1} is still largely open, except in some special situations when the metric satisfies various additional assumptions. In \cite{Tang}, Tang verified the conjecture under the additional assumption that the metric is {\em Chern K\"ahler-like,} a notion introduced by Yang and the second named author \cite{YangZ}, meaning that the curvature tensor of the Chern connection obeys all K\"ahler symmetries. Chen, Chen and Nie \cite{CCN} showed that if the metric is locally conformally K\"ahler and if the constant is zero or negative, then Conjecture \ref{conj1} holds. In \cite{ZhouZ}, Zhou and the second named author proved that any compact Hermitian threefold with vanishing {\em real bisectional curvature} must be Chern flat. Real bisectional curvature is a curvature notion introduced by X. Yang and the second named author in \cite{XYangZ}. It is equivalent to holomorphic sectional curvature $H$ in strength when the metric is K\"ahler, but is slightly stronger than $H$ when the metric is non-K\"ahler. In \cite{RZ}, Rao and the second named author showed that the conjecture holds if the metric is {\em Bismut K\"ahler-like,} meaning that the curvature of the Bismut connection obeys all K\"ahler symmetries. 

Recall that a {\em Lie-Hermitian manifold} is a  compact quotient $M=G/\Gamma$ of a Lie group $G$ by a discrete subgroup $\Gamma \subseteq G$, with the (lift of the) complex structure and metric on $G$ being left-invariant. $(M^n,g)$ is called a {\em complex nilmanifold} or {\em complex solvmanifold} if $G$ is nilpotent or solvable, respectively. 

Lie-Hermitian manifolds form a large and interesting class of special Hermitian manifolds with ample algebraic complexity. They are often used as testing ground in non-K\"ahler geometry. In the nilpotent case,  Li and the second named author showed in \cite{LZ} that Conjecture \ref{conj1} holds for all complex nilmanifolds. 

Let us denote by ${\mathfrak g}$ the Lie algebra of $G$. Then left-invariant metrics on $G$ correspond to metrics (namely, inner products) on the vector space ${\mathfrak g}$, and left-invariant complex structures on $G$ correspond to complex structures on ${\mathfrak g}$, namely, linear transformations $J:{\mathfrak g} \rightarrow {\mathfrak g}$ satisfying $J^2=-I$ and the integrability condition  
\begin{equation} \label{integrability}
[x,y] - [Jx,Jy] + J[Jx,y] + J[x,Jy] =0, \ \ \ \ \ \forall \ x,y \in {\mathfrak g}. 
\end{equation}
We will call a Lie algebra ${\mathfrak g}$ equipped with a complex structure $J$ and a compatible metric $g=\langle, \rangle$  a {\em Hermitian Lie algebra,} denoted by $({\mathfrak g}, J, g)$. Here and from now on, we will use the same letter to denote the metric/complex structure on $G$ and ${\mathfrak g}$.

The main purpose of this article is to verify the conjecture for a special type of Lie-Hermitian manifold: when the Lie algebra ${\mathfrak g}$ of $G$ is solvable and when $J{\mathfrak g}'={\mathfrak g}'$, where ${\mathfrak g}'=[{\mathfrak g},{\mathfrak g}]$ denotes the commutator of ${\mathfrak g}$. For simplicity, sub Lie algebras will be called {\em solvable Lie algebras with complex commutators} from now on.

\begin{theorem} \label{thm}
Let $(M^n,g)$ be a Lie-Hermitian manifold with universal cover $(G,J,g)$. Denote by ${\mathfrak g}$ the Lie algebra of $G$ and ${\mathfrak g}'=[{\mathfrak g},{\mathfrak g}]$ its commutator. Assume that ${\mathfrak g}$ is solvable and $J{\mathfrak g}'={\mathfrak g}'$. If the Chern connection of $g$ has constant holomorphic sectional curvature, then it must be Chern flat. 
\end{theorem}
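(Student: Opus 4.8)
\emph{Reductions and set-up.} Since $M=G/\Gamma$ is compact, $G$ (hence $\mathfrak g$) is unimodular, i.e.\ $\operatorname{tr}(\operatorname{ad}_x)=0$ for all $x\in\mathfrak g$. As $\mathfrak g$ is solvable, the commutator $\mathfrak g'=[\mathfrak g,\mathfrak g]$ is a nilpotent ideal, and by hypothesis it is $J$-invariant, so $(\mathfrak g',J|_{\mathfrak g'})$ is a complex nilpotent Lie algebra sitting inside $\mathfrak g$ as an ideal. Put $r=\dim_{\mathbb C}(\mathfrak g')^{1,0}$ and choose a unitary coframe $\varphi=(\varphi^1,\dots,\varphi^n)$ of $(1,0)$-forms so that $\varphi^{r+1},\dots,\varphi^n$ annihilate $\mathfrak g'$. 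Then the structure equations read
\begin{equation*}
d\varphi^i=\tfrac12\sum_{j,k}C^i_{jk}\,\varphi^j\wedge\varphi^k+\sum_{j,k}E^i_{j\bar k}\,\varphi^j\wedge\bar\varphi^k\quad(1\le i\le r),\qquad d\varphi^i=0\quad(i>r),
\end{equation*}
with no $\bar\varphi\wedge\bar\varphi$-terms because $J$ is integrable, with $C^i_{jk}=-C^i_{kj}$, and with $C,E$ subject to the Jacobi relations coming from $d^2=0$ and to the linear relation imposed by unimodularity (which one computes to be of the form $\sum_b C^b_{ab}=\overline{\sum_b E^b_{b\bar a}}$). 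First I would record these constraints explicitly.

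\emph{Chern connection and curvature.} As $\varphi$ is unitary, the Chern connection matrix $\theta$ is skew-Hermitian, and matching the $(1,1)$-part of $d\varphi=-\theta\wedge\varphi+\tau$ (with $\tau$ the $(2,0)$ torsion) gives the explicit formula
\begin{equation*}
\theta^i_j=-\sum_l\overline{E^j_{i\bar l}}\,\varphi^l+\sum_l E^i_{j\bar l}\,\bar\varphi^l .
\end{equation*}
In particular $\theta$ depends only on the mixed constants $E$ — the torsion constants $C$ never enter — and $\theta^i_j=0$ unless $\min(i,j)\le r$. Computing $\Theta=d\theta-\theta\wedge\theta$ and extracting its $(1,1)$-part, one checks that $C$ drops out as well, so every Chern curvature component $R_{i\bar j k\bar l}$ is a universal quadratic polynomial in the array $E=(E^i_{j\bar k})$ (nonzero only for $i\le r$). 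The whole problem thus becomes the purely algebraic one of showing that $E\equiv 0$.

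\emph{Imposing constant holomorphic sectional curvature.} Suppose $H\equiv c$. Polarizing the identity $R(X,\bar X,X,\bar X)=c\langle X,X\rangle^2$ over all $(1,0)$-vectors $X$ produces the usual family of relations on the $R_{i\bar j k\bar l}$: the diagonal ones $R_{i\bar i i\bar i}=c$, the two-index ones $R_{i\bar i j\bar j}+R_{j\bar j i\bar i}+R_{i\bar j j\bar i}+R_{j\bar i i\bar j}=2c$ and $R_{i\bar j i\bar j}=0$ for $i\ne j$, and the off-diagonal polarizations. Substituting the curvature formula converts these into polynomial equations in $E$. The plan is then twofold: (i) take suitable traces of the two-index relations, combine them with the unimodularity relation (and with the fact that for a left-invariant metric the Chern–Ricci form $d(\operatorname{tr}\theta)$ is exact), to constrain the constant — I expect this to force $c\le 0$ and ultimately $c=0$; (ii) with $c=0$ in hand, run a downward induction along the nilpotent flag of $\mathfrak g'$: ordering the frame so that $\varphi^1$ lies at the bottom (so $d\varphi^1$ has the simplest possible shape), the $H$-relations that involve the index $1$ become rigid enough to force the $E$-components carrying that index to vanish; peeling off $\varphi^1$ and iterating kills all of $E$.

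\emph{Main obstacle.} The delicate part is step (ii). Extracting the vanishing of $E$ from the polarized identities requires careful bookkeeping of exactly which structure constants occur in each $R_{i\bar j k\bar l}$, and the induction must be organized along the flag of $\mathfrak g'$ (and its interaction with the closed directions $\varphi^{r+1},\dots,\varphi^n$) so that at every stage only already-vanishing constants can interfere. The other point needing care is excluding $c>0$: here no maximum principle is available, and one must argue purely from the algebra of the unimodular solvable $\mathfrak g$, much as in the nilpotent case treated by Li and the second named author. Once $E\equiv 0$ is established, the formula above gives $\theta\equiv 0$, hence $\Theta\equiv 0$; the structure equations then reduce to $d\varphi^i=\tfrac12\sum C^i_{jk}\varphi^j\wedge\varphi^k$ with $C$ satisfying Jacobi, so $\mathfrak g$ is a complex Lie algebra with $g$ a compatible left-invariant metric, and $M$ is Chern flat (equivalently, by Boothby's theorem, a compact quotient of a complex Lie group), completing the proof.
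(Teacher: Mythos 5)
There is a genuine gap, and it is located exactly where you place the weight of the argument. Your plan reduces the theorem to ``showing that $E\equiv 0$'' and concludes via $\theta\equiv 0$, but that intermediate statement is false: constant (even zero) holomorphic sectional curvature does \emph{not} force all mixed structure constants to vanish. The paper's own examples show this — e.g.\ the algebra with unitary frame $e$ and only nonzero brackets $[e_1,e_2]=e_1$, $[e_i,e_3]=Z_ie_i$, $[\overline{e}_i,e_3]=Y_i\overline{e}_i$ is solvable with $J$-invariant commutator and Chern flat, yet has $D^i_{i3}=Y_i\neq 0$ and nonzero connection form $\theta_{ii}=Y_i\varphi_3-\overline{Y}_i\overline{\varphi}_3$ (flatness comes from $\Theta=d\theta-\theta\wedge\theta=0$, not from $\theta=0$); requiring $E\equiv0$ would force $\mathfrak g$ to be a complex Lie algebra, which is strictly stronger than Chern flatness. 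What is actually true, and what the paper proves, is only a \emph{partial} vanishing: in a frame adapted to $\mathfrak g'$ one gets $D^{\alpha}_{j\beta}=D^{j}_{ik}=D^{\alpha}_{ij}=0$ (indices $i,j,k\le r$, $\alpha,\beta>r$), while the components $D^{j}_{i\alpha}$ may survive; one then checks directly from the curvature formula that these remaining constants contribute nothing, so $R=0$. Your induction, however well organized, cannot reach the goal you set for it, and without the final ``partial vanishing $\Rightarrow R=0$'' verification the argument does not close.

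Beyond this, the two substantive steps are left as intentions rather than proofs. For $c=0$ you propose traces plus unimodularity and exactness of the Chern--Ricci form, but this is neither carried out nor needed: the paper gets $c\le 0$ immediately from $c=R_{\alpha\bar\alpha\alpha\bar\alpha}=-\sum_s|D^{\alpha}_{s\alpha}|^2$ in a direction transverse to $\mathfrak g'$, and $c\ge 0$ from $c=R_{r\bar r r\bar r}=\sum_s|D^s_{rr}|^2$, the latter relying on Salamon's theorem, which supplies a unitary frame of the nilpotent ideal $\mathfrak g'$ with triangular structure constants ($C^j_{ik}=0$ unless $j>i$ or $j>k$, $D^j_{ik}=0$ unless $i>j$). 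This triangularization is also the engine of the downward induction you allude to as ``the flag of $\mathfrak g'$'' but never actually invoke; without it the bookkeeping you flag as the main obstacle has no starting point. Note also that the paper's argument uses no compactness/unimodularity at all in this special case, so building the sign argument on unimodularity heads in an unnecessarily restrictive direction.
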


Note that the assumption of the theorem does not put any restriction on the solvable Lie algebra  ${\mathfrak g}$ other than its commutator ${\mathfrak g}'$ is even-dimensional, but it does put a strong restriction on the complex structure $J$, which must preserve ${\mathfrak g}'$. Nonetheless, there are still plenty of such Hermitian Lie algebras, and we will see some examples in \S 4. 

Finally let us remark on the compactness assumption in Conjecture \ref{conj1}. As mentioned before, the conjecture fails without the compactness assumption, and explicit counterexamples were given in \cite{CCN}. For Lie-Hermitian manifolds, we believe that there might be counterexamples as well for non-unimodular Lie algebras. However, in the special case of solvable Lie algebra with complex commutator, one does not need to assume the Lie algebra to be unimodular, in other words, Theorem \ref{thm} can be rephrased as the following:

{\em If ${\mathfrak g}$ is a solvable Lie algebra with a Hermitian structure so that the commutator is complex and the Chern holomorphic sectional curvature is constant, then it is Chern flat. }

\vspace{0.3cm}

\section{Hermitian Lie algebras}

Let us begin by recalling the definition of holomorphic sectional curvature. Given a Hermitian manifold $(M^n,g)$, let us denote by $\nabla$ the Chern connection, and by $T$, $R$ its torsion and curvature tensor, defined by
$$ T(x,y)=\nabla_xy-\nabla_yx-[x,y], \ \ \ \ \ R(x,y,z,w)=\langle \nabla_x\nabla_yz - \nabla_y\nabla_x z -\nabla_{[x,y]}z , w\rangle ,\ \ \ $$
respectively, where $x,y,z,w$ are tangent vectors on $M$, and we wrote $g=\langle , \rangle$.  Clearly, $R$ is skew-symmetric with respect to its first two positions, and since $\nabla$ is a metric connection (namely, since $\nabla g=0$), $R$ is also skew-symmetric with respect to its last two positions. Therefore one can define the {\em sectional curvature} of $\nabla$ by
$$ K(\pi ) = \frac{ R(x,y,y,x)}{|x|^2|y|^2 - \langle x,y\rangle ^2}, $$
for any $2$-dimensional subspace $\pi$ in the tangent space, with $\{ x,y\}$ being any basis of $\pi$. Clearly this value is independent of the choice of the basis. When $\pi$ is $J$-invariant, or equivalently, when there is a basis of $\pi$ in the form $\{ x, Jx\}$, then we get the {\em holomorphic sectional curvature}
$$ H = \frac{R(x,Jx,Jx,x)}{|x|^4}. $$
If we write $X=x-\sqrt{-1}Jx$, then the above value is equal to $H(X) = R(X, \overline{X}, X, \overline{X})/|X|^4$, which is perhaps the expression that people are more familiar with. Note that sectional curvature and holomorphic sectional curvature can be defined for any {\em metric connection} of $(M^n,g)$, not just the Chern connection, even though in this article we will only be interested in the Chern connection $\nabla$. Let $\{ e_1, \ldots , e_n\}$ be a local frame of type $(1,0)$ complex tangent vectors, write $g_{i\bar{j}}=\langle e_i, \overline{e}_j\rangle$, and let $R_{i\bar{j}k\bar{\ell}} $ be the short hand notation for $R(e_i, \overline{e}_j, e_k, \overline{e}_{\ell})$. As in \cite{LZ}, let us introduce the {\em symmetrization} of a $(4,0)$-tensor:
$$ \widehat{R}_{i\bar{j}k\bar{\ell}} = \frac{1}{4} \big( R_{i\bar{j}k\bar{\ell}} + R_{k\bar{j}i\bar{\ell}} + R_{i\bar{\ell}k\bar{j}} + R_{k\bar{\ell}i\bar{j}} \big) , $$
for any $1\leq i,j,k,\ell \leq n$. Then it is easy to see that

\begin{lemma} \label{lemma1}
Let $(M^n,g)$ be a Hermitian manifold. Then we have
$$ H=c \ \  \Longleftrightarrow \ \ \widehat{R}_{i\bar{j}k\bar{\ell}} = \frac{c}{2}\big( g_{i\bar{j}} g_{k\bar{\ell}} + g_{i\bar{\ell }} g_{k\bar{j}}  \big) , \ \ \forall \ 1\leq i,j,k,\ell \leq n. $$
\end{lemma}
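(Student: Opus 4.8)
The plan is to prove the two implications separately; both reduce to the elementary polarization of a $(2,2)$-form. Fix a point of $M$ and a local $(1,0)$-frame $\{e_1,\dots,e_n\}$, and for $X=\sum_i x_ie_i$ recall that $|X|^2=\sum_{i,j}g_{i\bar j}x_i\bar x_j$ and, by the definition above, $H(X)=R(X,\overline{X},X,\overline{X})/|X|^4$ with $R(X,\overline{X},X,\overline{X})=\sum_{i,j,k,\ell}R_{i\bar jk\bar\ell}\,x_i\bar x_jx_k\bar x_\ell$. Since the monomial $x_ix_k\bar x_j\bar x_\ell$ is invariant under $i\leftrightarrow k$ and under $j\leftrightarrow\ell$, relabelling summation indices gives $R(X,\overline{X},X,\overline{X})=\sum_{i,j,k,\ell}\widehat R_{i\bar jk\bar\ell}\,x_i\bar x_jx_k\bar x_\ell$. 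For the direction ``$\Leftarrow$'' I would then contract the asserted formula for $\widehat R_{i\bar jk\bar\ell}$ against $x_i\bar x_jx_k\bar x_\ell$: the left side is $R(X,\overline{X},X,\overline{X})$ by the previous sentence, while each of the two products $g_{i\bar j}g_{k\bar\ell}$, $g_{i\bar\ell}g_{k\bar j}$ contracts to $|X|^2\cdot|X|^2$, so $R(X,\overline{X},X,\overline{X})=c\,|X|^4$ for every $X$, which is exactly $H\equiv c$.

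For ``$\Rightarrow$'', the hypothesis $H\equiv c$ reads $R(X,\overline{X},X,\overline{X})=c\,|X|^4$ for all $X$. Writing $|X|^4=\sum g_{i\bar j}g_{k\bar\ell}x_i\bar x_jx_k\bar x_\ell$ and using the displayed rewriting of the left side, this becomes the identity of quartic forms
$$\sum_{i,j,k,\ell}\widehat R_{i\bar jk\bar\ell}\,x_i\bar x_jx_k\bar x_\ell=\sum_{i,j,k,\ell}\tfrac{c}{2}\big(g_{i\bar j}g_{k\bar\ell}+g_{i\bar\ell}g_{k\bar j}\big)\,x_i\bar x_jx_k\bar x_\ell,$$
valid for all $X\in\C^n$, where on the right I have already symmetrized $g_{i\bar j}g_{k\bar\ell}$ over $i\leftrightarrow k$ and $j\leftrightarrow\ell$ (which does not affect the contraction). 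Both coefficient tensors are symmetric under $i\leftrightarrow k$ and under $j\leftrightarrow\ell$ — the former by the construction of $\widehat R$, the latter by inspection. I would finish by the uniqueness of polarization: regarding the identity as a polynomial identity in the $2n$ independent variables $x_1,\dots,x_n,\bar x_1,\dots,\bar x_n$ and comparing the coefficients of a single monomial $x_ix_k\bar x_j\bar x_\ell$ recovers the corresponding component of a tensor carrying those two symmetries (the resulting linear system is in fact diagonal). Hence the two tensors coincide, i.e. $\widehat R_{i\bar jk\bar\ell}=\frac{c}{2}(g_{i\bar j}g_{k\bar\ell}+g_{i\bar\ell}g_{k\bar j})$ for all indices.

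There is no genuine obstacle here; this is a formal identity and the argument is short. The one point deserving a moment's care is the observation that the partial symmetrization defining $\widehat R$ retains \emph{precisely} the symmetries (invariance under $i\leftrightarrow k$ and under $j\leftrightarrow\ell$) that survive passage to the diagonal $R(X,\overline{X},X,\overline{X})$, so nothing is lost and the polarization is reversible. Note in particular that neither a first-Bianchi-type identity nor the reality relation $\overline{R_{i\bar jk\bar\ell}}=R_{j\bar i\ell\bar k}$ is needed for this equivalence — only the two index symmetries of $\widehat R$, together with the skew-symmetries of $R$ used to build it.
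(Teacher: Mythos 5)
Your proof is correct, and it is exactly the standard polarization argument that the paper leaves implicit (the lemma is stated with ``it is easy to see,'' following \cite{LZ}, with no written proof). In particular your key observation is the right one: because $\widehat R$ and the symmetrized metric tensor are both invariant under $i\leftrightarrow k$ and $j\leftrightarrow\ell$ separately, each component is determined by the two unordered index pairs, so the coefficient of each monomial $x_i x_k\bar x_j\bar x_\ell$ is a nonzero multiple of a single independent component and the comparison of quartic forms is indeed ``diagonal.''
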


From the formula, one sees clearly that the values of holomorphic sectional curvature $H$ could only control the symmetrization part of $R$, instead of the entire $R$. This is the main source of difficulty when dealing with Conjecture \ref{conj1}. 

Next let us consider Lie-complex manifolds, namely, compact quotients of Lie groups by their discrete  subgroups, where the complex structures are left-invariant. In the past a few decades, the Hermitian geometry of Lie-complex manifolds have been extensively studied from various aspects by many people, including A. Gray, S. Salamon, L. Ugarte, A. Fino, L. Vezzoni, F. Podest\`a, D. Angella, A. Andrada, and others. There is a large amount of literature on this topic, and here we will just mention a small sample: \cite{AU}, \cite{CFGU},  \cite{FP3}, \cite{FinoTomassini09},  \cite{GiustiPodesta}, \cite{Salamon}, \cite{Ugarte}, \cite{WYZ}, \cite{ZZ-Crelle}, \cite{ZZ-JGP}. For more general discussions on non-K\"ahler Hermitian geometry with a broader view, see for example \cite{AI}, \cite{AT}, \cite{Fu}, \cite{STW}, \cite{Tosatti} and the references therein.

Let $M=G/\Gamma$ be a compact quotient of a Lie group $G$ by a discrete subgroup $\Gamma \subseteq G$, equipped with (the descend of) a left-invariant complex structure $J$ on $G$.  Let ${\mathfrak g}$ be the Lie algebra of $G$, then $J$ corresponds to a complex structure (which for convenience we will still denote by $J$) on  ${\mathfrak g}$. Similarly, any left-invariant metric $g$ on $G$ compatible with $J$ will correspond to an inner product $g=\langle , \rangle$ on ${\mathfrak g}$ such that $\langle Jx,Jy\rangle = \langle x,y\rangle$ for any $x,y\in {\mathfrak g }$. Here again for convenience we use the same letter to denote the corresponding metric on the Lie algebra. 

Following \cite{VYZ},  let ${\mathfrak g}^{\mathbb C}$ be the complexification of ${\mathfrak g}$, and write ${\mathfrak g}^{1,0}= \{ x-\sqrt{-1}Jx \mid x \in {\mathfrak g}\} \subseteq {\mathfrak g}^{\mathbb C}$. The integrability condition (\ref{integrability}) means that ${\mathfrak g}^{1,0}$ is a complex Lie subalgebra of ${\mathfrak g}^{\mathbb C}$. As in \cite{VYZ} or \cite{GuoZ2}, let us extend $g=\langle , \rangle $ bi-linearly over ${\mathbb C}$, and let $e=\{ e_1, \ldots , e_n\}$ be a basis of ${\mathfrak g}^{1,0}$, which will be called a {\em frame} of $({\mathfrak g},J)$ or $({\mathfrak g}, J,g)$.  Denote by $\varphi$ the coframe dual to $e$, namely, a basis of the dual vector space $({\mathfrak g}^{1,0})^{\ast}$ such that $\varphi_i(e_j)=\delta_{ij}$, $\forall$ $1\leq i,j\leq n$. We will write
\begin{equation} \label{CandD}
C^j_{ik} = \varphi_j( [e_i,e_k] ), \ \ \ \ \ \  D^j_{ik} = \overline{\varphi}_i( [\overline{e}_j, e_k] )
\end{equation}
for the structure constants, which is  equivalent to
\begin{equation} \label{CandD2}
[e_i,e_j] = \sum_k C^k_{ij}e_k, \ \ \ \ \ [e_i, \overline{e}_j] = \sum_k \big( \overline{D^i_{kj}} e_k - D^j_{ki} \overline{e}_k \big) .
\end{equation}
In dual terms, the above is also equivalent to the (first) structure equation:
\begin{equation} \label{structure}
d\varphi_i = -\frac{1}{2} \sum_{j,k} C^i_{jk} \,\varphi_j\wedge \varphi_k - \sum_{j,k} \overline{D^j_{ik}} \,\varphi_j \wedge \overline{\varphi}_k, \ \ \ \ \ \ \forall \  1\leq i\leq n.
\end{equation}
Differentiate the above, we get the  first Bianchi identity, which is equivalent to the Jacobi identity in this case:
\begin{equation} \label{Bianchi}
\left\{  \begin{split}  \sum_r \big( C^r_{ij}C^{\ell}_{rk} + C^r_{jk}C^{\ell}_{ri} + C^r_{ki}C^{\ell}_{rj} \big) \ = \ 0,  \hspace{3.2cm}\\
 \sum_r \big( C^r_{ik}D^{\ell}_{jr} + D^r_{ji}D^{\ell}_{rk} - D^r_{jk}D^{\ell}_{ri} \big) \ = \ 0, \hspace{3cm} \\
 \sum_r \big( C^r_{ik}\overline{D^r_{j \ell}}  - C^j_{rk}\overline{D^i_{r \ell}} + C^j_{ri}\overline{D^k_{r \ell}} -  D^{\ell}_{ri}\overline{D^k_{j r}} +  D^{\ell}_{rk}\overline{D^i_{jr}}  \big) \ = \ 0,  \end{split} \right.  
\end{equation}
for any $1\leq i,j,k,\ell\leq n$. When $G$ has a compact quotient, it (or equivalently, its Lie algebra ${\mathfrak g}$) is necessarily {\em unimodular,} that is,  $\mbox{tr}(ad_x)=0$ for any $x\in {\mathfrak g}$. In terms of structure constants, 
\begin{equation*} \label{unimodular}
{\mathfrak g} \ \, \mbox{is unimodular}  \ \ \Longleftrightarrow  \ \ \sum_{s=1}^n \big( C^s_{si} + D^s_{si}\big) =0 , \, \ \forall \ i.
\end{equation*}
Denote by  $\nabla$ the Chern connection of $g$, and by $T$, $R$ the torsion and curvature tensor of $\nabla$. It is well-known that under any frame $e$, one always has $T( e_i, \overline{e}_j)=0$, while any $T(e_i,e_k)$ is always of type $(1,0)$. Let us write $ T(e_i,e_k) = \sum_k T^j_{ik}e_j$. Also, the only possibly non-zero components of $R$ are $R(e_i, \overline{e}_j , e_k, \overline{e}_{\ell}) =R_{i\bar{j}k\bar{\ell}}$. 

Note that so far we did not assume the frame $e$ to be unitary. When $e$ is unitary, namely, when $\langle e_i, \overline{e}_j\rangle = \delta_{ij}$ for any $1\leq i,j\leq n$, then the formula for the Chern torsion components $T^j_{ik}$ and Chern curvature components $R_{i\bar{j}k\bar{\ell}}$  under $e$ are particularly simple (\cite{GuoZ2, LZ}):

\begin{lemma}  \label{lemma2}
Given a Hermitian Lie algebra $({\mathfrak g}, J,g)$, let $e$ be a unitary frame, and structure constants $C$ and $D$ be given by (\ref{CandD}). Then the Chern torsion components $T^j_{ik}$ and Chern curvature components  $R_{i\bar{j}k\bar{\ell}}$  under $e$ are given by
\begin{eqnarray} 
  T^j_{ik} & = &  - C^j_{ik} -   D^{j}_{i k} + D^{j}_{k i} , \label{torsion} \\
 R_{i\bar{j}k\bar{\ell}} & = &  \sum_{s=1}^n \big( D^s_{ki}\overline{D^s_{\ell j}} - D^{\ell}_{si}\overline{D^k_{s j}} - D^j_{si}\overline{D^k_{ \ell s}} - \overline{D^i_{sj}} D^{\ell}_{k s} \big) . \label{curvature}
\end{eqnarray}
\end{lemma}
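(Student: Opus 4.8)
The plan is to first pin down the Chern connection explicitly at the Lie-algebra level and then substitute its coefficients into the definitions of torsion and curvature. Working over $\mathfrak{g}^{\mathbb C}$, I would record the connection by its action on the frame, writing $\nabla_{e_i}e_k=\sum_j \Gamma^j_{ik}e_j$ and $\nabla_{\bar e_i}e_k=\sum_j \Lambda^j_{ik}e_j$; these coefficients are constants since the frame is left-invariant, and because $\nabla$ is a real connection it commutes with conjugation, so $\nabla_{e_i}\bar e_k=\sum_j \overline{\Lambda^j_{ik}}\,\bar e_j$ and $\nabla_{\bar e_i}\bar e_k=\sum_j \overline{\Gamma^j_{ik}}\,\bar e_j$. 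The three defining properties of the Chern connection — metric compatibility, compatibility with $J$ (which is encoded in the fact that $\nabla$ already preserves the $(1,0)$ subspace, so each $\nabla_X e_k$ is of type $(1,0)$), and vanishing of the $(1,1)$-part of the torsion — will determine $\Gamma$ and $\Lambda$ uniquely in terms of the structure constants $D$.

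First I would use metric compatibility. Since $e$ is unitary and $\langle,\rangle$ is extended bilinearly, $\langle e_k,\bar e_\ell\rangle=\delta_{k\ell}$ is constant, so the product rule gives $0=e_i\langle e_k,\bar e_\ell\rangle=\langle\nabla_{e_i}e_k,\bar e_\ell\rangle+\langle e_k,\nabla_{e_i}\bar e_\ell\rangle$, which expands to $\Gamma^\ell_{ik}+\overline{\Lambda^k_{i\ell}}=0$. Next I would impose $T(e_i,\bar e_j)=0$: expanding $T(e_i,\bar e_j)=\nabla_{e_i}\bar e_j-\nabla_{\bar e_j}e_i-[e_i,\bar e_j]$ with the second line of (\ref{CandD2}) and separating the $(1,0)$ and $(0,1)$ parts yields $\Lambda^k_{ji}=-\overline{D^i_{kj}}$ together with its conjugate, the two being consistent. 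This gives $\Lambda^c_{ab}=-\overline{D^b_{ca}}$, and substituting back into the metric-compatibility relation produces $\Gamma^c_{ab}=D^c_{ba}$, so the connection is now completely explicit.

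The torsion formula (\ref{torsion}) then drops out by substituting into $T(e_i,e_k)=\nabla_{e_i}e_k-\nabla_{e_k}e_i-[e_i,e_k]$ and reading off the coefficient of $e_j$, using $\Gamma^j_{ik}=D^j_{ki}$ and the first line of (\ref{CandD2}). For the curvature formula (\ref{curvature}) I would evaluate $R_{i\bar jk\bar\ell}=\langle\nabla_{e_i}\nabla_{\bar e_j}e_k-\nabla_{\bar e_j}\nabla_{e_i}e_k-\nabla_{[e_i,\bar e_j]}e_k,\bar e_\ell\rangle$ term by term: each of the first two iterated derivatives contracts one $\Lambda$ against one $\Gamma$, while the bracket term requires splitting $[e_i,\bar e_j]$ into its $(1,0)$ and $(0,1)$ pieces via (\ref{CandD2}) and applying $\Gamma$ and $\Lambda$ respectively. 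Inserting $\Gamma^c_{ab}=D^c_{ba}$ and $\Lambda^c_{ab}=-\overline{D^b_{ca}}$ and collecting the resulting four sums reproduces (\ref{curvature}). I expect the only real obstacle to be organizational — keeping the index placements and the conjugations straight in the curvature term once the bilinear (rather than Hermitian) extension of the metric is in force — since the genuine content, namely the uniqueness that fixes $\Gamma$ and $\Lambda$, is already spent in the first two steps.
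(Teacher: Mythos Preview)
Your proposal is correct. The paper itself does not supply a proof of this lemma; it simply records the formulas and cites \cite{GuoZ2, LZ}. Your argument is the standard direct derivation: pin down the Chern connection coefficients from metric compatibility and $T(e_i,\bar e_j)=0$, obtaining $\Gamma^c_{ab}=D^c_{ba}$ and $\Lambda^c_{ab}=-\overline{D^b_{ca}}$, and then substitute into the definitions of $T$ and $R$. I checked the index bookkeeping in your curvature computation and all four terms land exactly on (\ref{curvature}), so the only caution you flag --- tracking conjugations under the bilinear extension --- is indeed the sole hazard, and you have handled it.
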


\vspace{0.3cm}

\section{Proof of Theorem \ref{thm}}

Let ${\mathfrak g}$ be  a solvable Lie algebra, equipped with a complex structure $J$ and a  metric $g=\langle ,\rangle$ compatible with $J$. Since Conjecture \ref{conj1} is known for nilmanifolds by \cite{LZ}, we will assume that ${\mathfrak g}$ is not nilpotent. Assume that the commutator ${\mathfrak g}'=[{\mathfrak g},{\mathfrak g}]$ is $J$-invariant. As is well-known, the commutator of a solvable Lie algebra is always nilpotent, so in particular we have  ${\mathfrak g}'\neq {\mathfrak g}$. 

We will use a particular type of unitary frames that will be most convenient for our proofs. First of all, we want our unitary frame $e$  on ${\mathfrak g}$ so that ${\mathfrak g}'$ is spanned by $\{ e_i+\overline{e}_i, \sqrt{-1}(e_i - \overline{e}_i)\}_{1\leq i\leq r}$. Throughout this section, we will always make the following convention on the range of indices:
$$ 1\leq i,j,\cdots \leq r, \ \ \ \ \ \ r\!+\!1\leq \alpha, \beta , \cdots \leq n, \ \ \ \ \ \ 1\leq a,b,\cdots \leq n. $$
By (\ref{CandD2}) and the definition of ${\mathfrak g}'$, we get the following
\begin{equation}  \label{eq:res1}
C^{\alpha}_{ab}=D^{a}_{\alpha b}=0, \ \ \ \ \forall \ r\!+\!1\leq \alpha \leq n, \ \forall \ 1\leq a,b\leq n. 
\end{equation}
Let us restrict the Hermitian structure onto ${\mathfrak g}'$. Then $\{  e_1, \ldots , e_r\}$ becomes a unitary frame for  ${\mathfrak g}'$. Since ${\mathfrak g}'$ is nilpotent, thanks to the beautiful theorem of Salamon \cite[Theorem 1.3]{Salamon}, we know that, by a unitary change of $\{ e_1, \ldots , e_r\}$ if necessary, we may assume that
\begin{equation}  \label{eq:res2}
C^j_{ik}=0  \ \  \mbox{unless} \ j>i \ \mbox{or} \ j>k; \ \ \ \ D^j_{ik}=0 \  \ \mbox{unless} \ i>j, \ \ \ \forall \ 1\leq i,j,k\leq r.
\end{equation}
This will be a crucial property which we will use repeatedly. We refer the readers to \cite{LZ} for a more detailed discussion on this, and the above is simply formula (15) in \cite{LZ}. We will call such a unitary frame $e$ on ${\mathfrak g}$  an {\em admissible frame} from now on.

\begin{lemma} \label{lemmac=0}
Let $({\mathfrak g},J,g)$ be a Lie algebra with Hermitian structure such that ${\mathfrak g}$ is solvable and  $J{\mathfrak g}'={\mathfrak g}'$. Assume that the holomorphic sectional curvature $H$ of the Chern connection of $g$ is a constant $c$. Then $c=0$
\end{lemma}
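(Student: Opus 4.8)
The plan is to pin down the constant $c$ by evaluating the holomorphic sectional curvature along two carefully chosen coordinate directions of an admissible frame: one direction lying inside ${\mathfrak g}'$, which will force $c\geq 0$, and one lying outside ${\mathfrak g}'$, which will force $c\leq 0$. Together these give $c=0$.

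Concretely, I would fix an admissible frame $e$, so that the structure constants obey (\ref{eq:res1}) and (\ref{eq:res2}). Since we have already reduced to the case where ${\mathfrak g}$ is solvable but not nilpotent, ${\mathfrak g}'\neq 0$ (otherwise ${\mathfrak g}$ is abelian) and ${\mathfrak g}'\neq{\mathfrak g}$ (the commutator of a solvable Lie algebra is nilpotent, hence proper), so $1\leq r<n$ and both an index $r$ and an index $\alpha>r$ are available. Applying Lemma \ref{lemma1} with $i=j=k=\ell=a$ and using that $e$ is unitary, the hypothesis $H\equiv c$ yields $R_{a\bar{a}a\bar{a}}=c$ for every $a$; in particular $R_{r\bar{r}r\bar{r}}=c=R_{\alpha\bar{\alpha}\alpha\bar{\alpha}}$.

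Next I would compute these two components from the curvature formula (\ref{curvature}). For $R_{r\bar{r}r\bar{r}}=\sum_s\big(|D^s_{rr}|^2-|D^r_{sr}|^2-D^r_{sr}\overline{D^r_{rs}}-\overline{D^r_{sr}}D^r_{rs}\big)$, every term past the first carries a factor $D^r_{sr}$ or its conjugate, and $D^r_{sr}=0$ for all $s$: when $s\leq r$ this is (\ref{eq:res2}) (which would require $s>r$), and when $s>r$ it is (\ref{eq:res1}), the first lower index then being $>r$. Hence $R_{r\bar{r}r\bar{r}}=\sum_s|D^s_{rr}|^2\geq 0$, so $c\geq 0$. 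For $R_{\alpha\bar{\alpha}\alpha\bar{\alpha}}=\sum_s\big(|D^s_{\alpha\alpha}|^2-|D^\alpha_{s\alpha}|^2-D^\alpha_{s\alpha}\overline{D^\alpha_{\alpha s}}-\overline{D^\alpha_{s\alpha}}D^\alpha_{\alpha s}\big)$, the factors $D^s_{\alpha\alpha}$ and $D^\alpha_{\alpha s}$ vanish by (\ref{eq:res1}), since their first lower index is $\alpha>r$, leaving $R_{\alpha\bar{\alpha}\alpha\bar{\alpha}}=-\sum_s|D^\alpha_{s\alpha}|^2\leq 0$, so $c\leq 0$. Comparing, $c=0$.

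I do not expect a genuine obstacle here: the admissible frame does all the work, and the only point requiring attention is bookkeeping, namely keeping straight which of the three indices of $D^j_{ik}$ is the ``output'' (first lower) index, so that (\ref{eq:res1}) and (\ref{eq:res2}) are applied to the correct slot. This lemma is only the first step; the substantive part of Theorem \ref{thm} will be the subsequent argument showing that $c=0$ forces the remaining structure constants to vanish, i.e.\ Chern flatness.
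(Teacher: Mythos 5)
Your proposal is correct and follows essentially the same route as the paper: evaluate $R_{\alpha\bar\alpha\alpha\bar\alpha}$ for an index $\alpha>r$ to get $c\leq 0$ and $R_{r\bar r r\bar r}$ (using the Salamon-type triangularity of the admissible frame to kill $D^r_{sr}$) to get $c\geq 0$, hence $c=0$. The only cosmetic difference is that the paper first records the general identity for $R_{i\bar i i\bar i}$, $1\leq i\leq r$, and then specializes to $i=r$, while you treat $i=r$ directly.
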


\begin{proof}
Let $e$ be an admissible frame of ${\mathfrak g}$. Fix any $\alpha$ in $\{ r\!+\!1, \ldots ,n\}$. By the fact $D^{\ast}_{\alpha \ast}=0$ and by formula (\ref{curvature}) in Lemma \ref{lemma2}, we get
\begin{equation} \label{eq:alpha}
 c=R_{\alpha \bar{\alpha} \alpha \bar{\alpha} } = - \sum_{s=1}^n |D^{\alpha }_{s\alpha}|^2 \ \leq \,0. 
 \end{equation}
Similarly, for any $1\leq i\leq r$, we have
\begin{equation}  \label{eq:i}
 c=R_{i \bar{i} i \bar{i} } =  \sum_{s=1}^n |D^s_{ii}|^2 - \sum_{s=1}^r |D^{i}_{si}|^2 ,
 \end{equation}
since the summing index $s$ for the last two terms on the right-hand side of formula (\ref{curvature}) cannot exceed $r$, while $D^i_{is}=0$ by (\ref{eq:res2}). In particular, if we take $i=r$, then the second term on the right-hand side of (\ref{eq:i}) vanishes by (\ref{eq:res2}), so we get 
\begin{equation}  \label{eq:r}
 c=R_{r \bar{r} r \bar{r} } =  \sum_{s=1}^n |D^s_{rr}|^2 \ \geq \, 0. 
 \end{equation}
Comparing (\ref{eq:alpha}) with (\ref{eq:r}), we conclude that $c=0$. 
\end{proof}

\begin{lemma} \label{lemmagoal}
Let $({\mathfrak g},J,g)$ be a Lie algebra with Hermitian structure such that ${\mathfrak g}$ is solvable and  $J{\mathfrak g}'={\mathfrak g}'$. Assume that the holomorphic sectional curvature $H$ of the Chern connection of $g$ is zero. Then under any admissible frame $e$ of ${\mathfrak g}$ it holds
\begin{equation}  \label{eq:goal}
D^{\alpha}_{j\beta}=D^j_{ik}=D^{\alpha}_{ij}=0,  \ \ \ \ \ \ \ \ \forall\ 1\leq i,j,k\leq r, \ \ \ \forall \ r\!+\!1\leq \alpha , \beta \leq n. 
 \end{equation}
\end{lemma}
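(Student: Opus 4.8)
The plan is as follows. By Lemma \ref{lemmac=0} the constant is $c=0$, so by Lemma \ref{lemma1} together with the fact that the frame $e$ is unitary, the full symmetrization vanishes identically: $\widehat{R}_{a\bar{b}c\bar{d}}=0$ for all $1\le a,b,c,d\le n$. Fix any admissible frame $e$ (the conclusion only refers to the defining properties (\ref{eq:res1}), (\ref{eq:res2}) of admissibility, so it suffices to argue with a single such frame). The idea is to feed this vanishing into the explicit curvature formula (\ref{curvature}) of Lemma \ref{lemma2}, evaluated at index patterns that straddle the splitting $\mathfrak{g}^{1,0}=\mathrm{span}\{e_1,\dots,e_r\}\oplus\mathrm{span}\{e_{r+1},\dots,e_n\}$, and to use (\ref{eq:res1}) and (\ref{eq:res2}) to annihilate most of the terms that occur, so that each identity collapses to a nonnegative combination of squared moduli of precisely the components in (\ref{eq:goal}) (plus components already known to vanish).

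I would first record the vanishings that come essentially for free. Reading the proof of Lemma \ref{lemmac=0} with $c=0$, equation (\ref{eq:alpha}) forces $D^{\alpha}_{s\alpha}=0$ for all $s$ and all $\alpha$ (the $\beta=\alpha$ case of the first family), while equation (\ref{eq:r}) forces $D^s_{rr}=0$ for all $s$, i.e.\ $D^j_{rr}=0$ and $D^{\alpha}_{rr}=0$ (pieces of the second and third families); and $D^a_{as}=0$ for all $a,s$ by (\ref{eq:res2}). These serve as the base of the inductions that follow.

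The core of the argument is the vanishing of the first family $D^{\alpha}_{j\beta}$ together with the third family $D^{\alpha}_{ij}$: these are exactly the components $D^{\alpha}_{ib}$ with $\alpha>r$ and $i\le r$ ($b$ running over $1,\dots,n$; $b\le r$ gives the third family, $b>r$ the first), and I would prove them together by an induction exploiting the partial order (\ref{eq:res2}) on the $\mathfrak{g}'$-block. At each stage one combines several of the identities $\widehat{R}_{i\bar{i}\alpha\bar{\alpha}}=0$, $\widehat{R}_{i\bar{j}\alpha\bar{\beta}}=0$, $\widehat{R}_{i\bar{\alpha}j\bar{\beta}}=0$ and their permutations; after deleting every term containing a $D$ whose first lower index lies in $\{r+1,\dots,n\}$ (zero by (\ref{eq:res1})) or that violates (\ref{eq:res2}), each such identity reduces to a short quadratic relation among the four families $D^{\alpha}_{ij},\ D^{\alpha}_{i\beta},\ D^j_{i\beta},\ D^j_{ik}$, and a suitable linear combination of them — supplemented, when an indefinite cross term $\mathrm{Re}\,(\overline{D^{\alpha}_{si}}\,D^{\alpha}_{is})$ refuses to cancel, by the Jacobi/Bianchi identities (\ref{Bianchi}) — turns the relation into a genuine sum of squares forcing the desired components to vanish. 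This bookkeeping is the step I expect to be the main obstacle: since $\widehat{R}$ only detects the symmetrized part of $R$, no single curvature component (and no single identity $\widehat{R}=0$) is sign-definite, so one must pin down the right combination of identities, summed over the right indices in the right inductive order, for everything to telescope into nonnegative quantities.

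Once the first and third families vanish, the second one is quick. With them gone, the only terms of the curvature formula (\ref{curvature}) with $i,j,k,\ell\le r$ whose summation index is $s>r$ are those of the shape $D^s_{ki}\overline{D^s_{\ell j}}$ with $s>r$, and these vanish by the third family; hence $R_{i\bar{j}k\bar{\ell}}$ for $i,j,k,\ell\le r$ coincides with the intrinsic Chern curvature $R'$ of the Hermitian \emph{nilpotent} Lie algebra $(\mathfrak{g}',J,g)$, and $\widehat{R'}\equiv0$, i.e.\ $(\mathfrak{g}',J,g)$ has constant holomorphic sectional curvature zero. The nilmanifold case of Conjecture \ref{conj1}, proved in \cite{LZ} — where, thanks to the normalization (\ref{eq:res2}), one in fact obtains $D^j_{ik}=0$ for all $i,j,k\le r$ — then yields the second family and completes (\ref{eq:goal}).
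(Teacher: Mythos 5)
Your plan correctly sets up the framework (by Lemma \ref{lemmac=0} one has $c=0$, hence $\widehat{R}\equiv 0$ by Lemma \ref{lemma1}, to be fed into (\ref{curvature}) together with (\ref{eq:res1}), (\ref{eq:res2})), but the heart of the lemma is exactly the step you defer: you write that ``a suitable linear combination'' of identities $\widehat{R}_{i\bar i\alpha\bar\alpha}=0$, $\widehat{R}_{i\bar j\alpha\bar\beta}=0$, etc., ``supplemented by (\ref{Bianchi})'', will ``telescope into nonnegative quantities'', and you yourself flag this bookkeeping as the main obstacle. That is not a proof of the two hard families; it is a restatement of the difficulty. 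In the actual argument no appeal to the Bianchi identities is needed, and the order of the three families matters. The first family is immediate: since a unitary change of $\{e_{r+1},\dots,e_n\}$ preserves admissibility, (\ref{eq:alpha}) polarizes from $D^{\alpha}_{j\alpha}=0$ to $D^{\alpha}_{j\beta}=0$. The \emph{second} family is then obtained before the third, by a descending induction on the distinguished index using the identity $4\widehat{R}_{i\bar i k\bar k}=0$ specialized to $k=r$, then $k=r-1$, etc.: at each stage (\ref{eq:res2}) and the previously established vanishings reduce the right-hand side to a nonpositive quantity while the left-hand side is a sum of squares $\sum_s|D^s_{ki}+D^s_{ik}|^2$, forcing everything to vanish. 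Only after $D^j_{ik}=0$ for all $i,j,k\le r$ can one polarize (\ref{eq:i}) over unitary changes of $\{e_1,\dots,e_r\}$ (the statement $D^j_{ik}=0$ being invariant under such changes, unlike (\ref{eq:res2})) to get the skew-symmetry $D^{\alpha}_{ij}+D^{\alpha}_{ji}=0$; it is precisely this skew-symmetry, not (\ref{Bianchi}), that converts the indefinite cross term $\mathrm{Re}\,(D^{\alpha}_{si}\overline{D^{\alpha}_{is}})$ in $4\widehat{R}_{i\bar i\alpha\bar\alpha}=0$ into $+|D^{\alpha}_{si}|^2$, and summing over $i$ kills the remaining indefinite pair $|D^s_{i\alpha}|^2-|D^i_{s\alpha}|^2$. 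So your proposed ordering (first and third families together, second family last) removes the very ingredient that makes the third family tractable, and no substitute argument is supplied.

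Your final step is a genuinely different and correct observation: once the first and third families vanish, the only $s>r$ contributions to (\ref{curvature}) with all four indices $\le r$ are $\sum_{s>r}D^s_{ki}\overline{D^s_{\ell j}}$, so $R$ restricted to $\mathfrak{g}'$ is the intrinsic Chern curvature of the nilpotent Hermitian Lie algebra $(\mathfrak{g}',J,g)$, and the nilmanifold result of \cite{LZ} (in the Salamon frame) would give $D^j_{ik}=0$. This would be an acceptable alternative to the paper's direct induction for the second family; but as structured it is moot, because in your scheme the third family is an input to this reduction and is precisely what you have not proved. To repair the proposal you should either prove the second family first (the descending induction on $k=r,r-1,\dots$ indicated above) and then the third, or else produce an explicit sign-definite combination of symmetrized curvature identities establishing $D^{\alpha}_{ij}=0$ without knowing $D^j_{ik}=0$, which you have not done.
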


\begin{proof}
By formula (\ref{eq:alpha}) and the vanishing of the holomorphic sectional curvature, we know that $D^{\alpha}_{j\alpha}=0$ for any $j$ and any $\alpha$. If we make any unitary change on $\{ e_{r\!+\!1},\ldots , e_n\}$, formula (\ref{eq:res1}) and (\ref{eq:res2}) would not be affected and $e$ would still be an admissible frame. So formula (\ref{eq:alpha}) actually gives us $D^{X}_{jX}=0$ for any $j$ and any $X=\sum_{\alpha =r\!+\!1}^n X_{\alpha}e_{\alpha}$. That is, $\sum_{\alpha ,\beta=r\!+\!1}^n \overline{X}_{\alpha} X_{\beta} D^{\alpha}_{j\beta }=0$. This leads to $D^{\alpha}_{j\beta}=0$ for any $j$ and any $\alpha$, $\beta$. 

Analogously, the vanishing of $H$ and (\ref{eq:r}) give us $D^{\ast}_{rr}=0$. Our next goal is to prove the remaining part of (\ref{eq:goal}), namely, 
$D^j_{ik}=D^{\alpha}_{ij}=0$ for any $ i$,\,$j$,\,$k$ and any $ \alpha$. To prove this, let us fix any $1\leq i<k\leq r$ and consider
\begin{eqnarray}
0 & = & 4\widehat{R}_{i\bar{i}k\bar{k}} \ = \ R_{i\bar{i}k\bar{k}} + R_{k\bar{k}i\bar{i}}  + R_{i\bar{k}k\bar{i}} +R_{k\bar{i}i\bar{k}} \nonumber \\
& = & \sum_{s=1}^n \{ \big( |D^s_{ki}|^2 -  |D^k_{si}|^2\big) + \big(|D^s_{ik}|^2 -  |D^i_{sk}|^2\big)  +2\mbox{Re}  \big( D^s_{ki}\overline{D^s_{ik}} - D^i_{si}\overline{D^k_{sk}} - \overline{D^i_{sk}} D^i_{ks} \big) \} \nonumber \\
& = & \sum_{s=1}^n |D^s_{ki}+D^s_{ik}|^2 -  \sum_{s=1}^r \big( |D^k_{si}|^2 + |D^i_{sk}|^2 + 2\mbox{Re} \{ D^i_{si}\overline{D^k_{sk}} + \overline{D^i_{sk}} D^i_{ks} \} \big). \label{eq:iikk}
\end{eqnarray}
In the last equality above, we used the fact that $D^{\ast}_{\alpha \ast}=0$ for any $\alpha >r$ by (\ref{eq:res1}). Let $k=r$ in (\ref{eq:iikk}), and by (\ref{eq:res2}) we get
\begin{equation}  \label{eq:iirr}
 \sum_{s=1}^n |D^s_{ri}+D^s_{ir}|^2 \, = \, \sum_{s=1}^r \big(  |D^i_{sr}|^2 + 2\mbox{Re} \{  \overline{D^i_{sr}} D^i_{rs} \} \big) , \ \ \ \ \ \ \forall \ 1\leq i<r.
 \end{equation}
We claim that the above will lead to the following:
\begin{equation}  \label{eq:claimr}
 D^s_{ri}+D^s_{ir}=D^i_{jr}=D^i_{rj}=0, \ \ \ \ \ \ \forall \ 1\leq i,j\leq r, \ \ \forall \ 1\leq s\leq n.
 \end{equation}
To prove the claim, let us perform induction on $i$. It holds for $i=r$ by (\ref{eq:res2}) and the fact $D^{\ast}_{rr}=0$. Let $i=r\!-\!1$ in (\ref{eq:iirr}). The right-hand side of (\ref{eq:iirr}) vanishes as $s$ has to be $r$ by (\ref{eq:res2}) while $D^{\ast}_{rr}=0$. Therefore we get $D^s_{r,r\!-\!1}+D^s_{r\!-\!1,r} =0$ for any $s$, hence (\ref{eq:claimr}) holds for $i=r\!-\!1$. Next let us take $i=r\!-\!2$ in (\ref{eq:iirr}). On the right hand side, $s$ has to be $r\!-\!1$. But by (\ref{eq:claimr}) for $i=r\!-\!1$ we have $D^{r\!-\!2}_{r\!-\!1,r} + D^{r\!-\!2}_{r,r\!-\!1}=0$. Therefore the right-hand side of (\ref{eq:iirr}) becomes $-| D^{r\!-\!2}_{r\!-\!1,r} |^2$, so we get (\ref{eq:claimr}) for the $i=r\!-\!2$ case. Repeating this process, the Claim (\ref{eq:claimr}) is proved. In particular, we have shown that
\begin{equation*}  \label{eq:claimDr}
 D^i_{jr}=D^i_{rj}=D^r_{ij} =0, \ \ \ \ \ \ \forall \ 1\leq i,j\leq r.
 \end{equation*}
If we look at the $i=r\!-\!1$ case of (\ref{eq:i}), then the second term on the right-hand side vanishes, so we have
\begin{equation}  \label{eq:Dr-1r-1}
 D^s_{r\!-\!1,r\!-\!1}=0, \ \ \ \ \ \ \forall \ 1\leq s\leq n.
 \end{equation}
Now let us take $k=r\!-\!1$ in (\ref{eq:iikk}), and get
\begin{equation}  \label{eq:iir-1r-1}
 \sum_{s=1}^n |D^s_{r\!-\!1,i}+D^s_{i,r\!-\!1}|^2 \, = \, \sum_{s=1}^r \big(  |D^i_{s,r\!-\!1}|^2 + 2\mbox{Re} \{  \overline{D^i_{s,r\!-\!1}} D^i_{r\!-\!1,s} \} \big) , \ \ \ \ \ \ \forall \ 1\leq i<r\!-\!1.
 \end{equation}
It leads to the following by arguing inductively in $i$ in decreasing order as we have seen before:
\begin{equation}  \label{eq:claimr-1}
 D^s_{r\!-\!1,i}+D^s_{i,r\!-\!1}=D^i_{j,r\!-\!1}=D^i_{r\!-\!1,j}=0, \ \ \ \ \ \ \forall \ 1\leq i,j\leq r, \ \ \forall \ 1\leq s\leq n.
 \end{equation}
In particular, we have shown that
$$ D^i_{j,r\!-\!1}=D^i_{r\!-\!1,j}=D^{r\!-\!1}_{ij} =0, \ \ \ \ \ \ \forall \ 1\leq i,j\leq r. $$
In other words, for $D^j_{ik}$ with $1\leq i,j,k\leq r$, it vanishes if any of the index is $r$ or $r\!-\!1$. Repeating this process, we conclude that 
$D^j_{ik}=0$ for all  $1\leq i,j,k\leq r$. This completes the proof of the second equality of (\ref{eq:goal}). To prove the third equality of (\ref{eq:goal}), note that the second term on the right-hand side of (\ref{eq:i}) now vanishes, so it gives us $D^{\alpha}_{ii}=0$. If we perform a unitary change on $\{ e_1, \ldots , e_r\}$, we still have $D^j_{ik}=0$ for all $1\leq i,j,k\leq r$. So what (\ref{eq:i}) gives us is actually $D^{\alpha}_{vv}=0$ for any $v=v_1e_1+\cdots + v_re_r$. In particular we have 
\begin{equation}  \label{eq:Dalphaij}
 D^{\alpha}_{ij}+ D^{\alpha}_{ji} =0 , \ \ \ \ \ \ \ \ \forall \ 1\leq i,j\leq r, \ \ \forall \ r\!+\!1\leq \alpha \leq n.
 \end{equation}
For any fixed $1\leq i\leq r$ and $r\!+\!1\leq \alpha \leq n$, by (\ref{eq:res2}) and the first equality of (\ref{eq:goal}) we have
\begin{eqnarray*}
0 & = & 4\widehat{R}_{i\bar{i}\alpha \bar{\alpha}} \ = \ R_{i\bar{i}\alpha \bar{\alpha}} + R_{\alpha \bar{\alpha}i\bar{i}}  + R_{i\bar{\alpha}\alpha \bar{i}} +R_{\alpha \bar{i}i\bar{\alpha}} \nonumber \\
& = & \sum_{s=1}^r  \{  |D^s_{i\alpha}|^2 -  |D^i_{s\alpha}|^2- |D^{\alpha}_{si}|^2 -  2\mbox{Re}  \big( D^{\alpha}_{si}\overline{D^{\alpha}_{is}}  \big) \} \nonumber \\
& = & \sum_{s=1}^r  \{  |D^s_{i\alpha}|^2 -  |D^i_{s\alpha}|^2 + |D^{\alpha}_{si}|^2  \}.
\end{eqnarray*}
Here in the last equality we used the (\ref{eq:Dalphaij}) for the last term. If we sum $i$ from $1$ to $r$, then the first two terms on the right-hand side of the last line above will cancel each other, and it gives us $D^{\alpha}_{si}=0$ for any $1\leq s,i\leq r$. This completes the proof of the third equality of (\ref{eq:goal}) hence the lemma. 
\end{proof}

Now we are ready to finish the proof of Theorem \ref{thm}.

\begin{proof}[{\bf Proof of Theorem \ref{thm}.}]
Let $({\mathfrak g}, J,g)$ be a solvable Lie algebra equipped with a Hermitian structure. Assume that $J{\mathfrak g}'={\mathfrak g}'$ and the Chern connection of $g$ has constant holomorphic sectional curvature: $H=c$. Then $c=0$ by Lemma \ref{lemmac=0}.  Our goal is  to show that the Chern curvature of $g$ must vanish identically: $R=0$. Let $e$ be an admissible frame of ${\mathfrak g}$ so (\ref{eq:res1}) and (\ref{eq:res2}) hold. By (\ref{eq:goal}) the only possibly non-zero $D$ components are $ D^j_{i\alpha}$, where $ 1\leq i,j\leq r$ and $r\!+\!1 \leq \alpha \leq n$. In particular, $D^{\alpha}_{\ast\ast} = D^{\ast}_{\alpha \ast}=0$. So by (\ref{curvature}) we know that
\begin{equation} \label{eq:alphaat34}
 R_{\ast \bar{\ast} \alpha \bar{\ast}} =  R_{\ast \bar{\ast} \ast \bar{\alpha} }=0, \ \ \ \ \ \ \forall \ r\!+\!1 \leq \alpha \leq n. 
 \end{equation}
Therefore $R_{a\bar{b}c\bar{d}}=0$ if at least three of the indices belong to $\{ r\!+\!1, \ldots , n\}$. Also by (\ref{curvature}) we have
$$ R_{a\bar{b}c\bar{d}} = \sum_{s=1}^n D^s_{ca} \overline{D^s_{db} } - \sum_{s=1}^n D^d_{sa} \overline{D^c_{sb} }, $$
since for the last two terms in the right hand side of (\ref{curvature})  the index $s$ cannot be simultaneously $>r$ and $\leq r$. If exactly two of the four indices belong to $\{ r\!+\!1, \ldots , n\}$, then by (\ref{eq:alphaat34}) we get
$$R_{\alpha \bar{\beta}i\bar{j}}=\widehat{R}_{\alpha \bar{\beta}i\bar{j}}=0. $$
If only one of the four indices belongs to $\{ r\!+\!1, \ldots , n\}$, then it is
$$R_{\alpha \bar{j}k\bar{\ell}}=   \sum_{s=1}^n \big( D^s_{k\alpha } \overline{D^s_{\ell j} } -  D^{\ell}_{s\alpha } \overline{D^k_{sj} }\big).$$
This equals to zero since for each of the two terms on the right-hand side, the second factor is zero by (\ref{eq:goal}). Similarly, when all four indices are in $\{ 1, \ldots , r\}$, $R_{i\bar{j}k\bar{\ell}}=0$ by (\ref{eq:goal}). Therefore, we have shown that $R=0$, completing the proof of Theorem \ref{thm}.
 \end{proof}

\vspace{0.3cm}

\section{Examples}

A special type of Lie algebras satisfying the assumptions of Theorem \ref{thm} are $2$-step solvable Lie algebras of pure type II. In \cite{FSwann, FSwann2}, Freibert and Swann give a systematic study on the Hermitian geometry of $2$-step solvable Lie algebras, namely, solvable ${\mathfrak g}$ which are non-abelian but with ${\mathfrak g}'= [{\mathfrak g}, {\mathfrak g}]$ being abelian. Among other things, they characterized balanced metrics (namely Hermitian metrics satisfying $d(\omega^{n-1})=0$) and pluriclosed metrics (namely Hermitian metrics satisfying $\partial \overline{\partial} \omega =0$) on such Lie algebras, and confirmed Fino-Vezzoni Conjecture for all such Lie algebras that are of {\em pure types}. Here $\omega$ denotes the K\"ahler form of the Hermitian metric, $n$ denotes the complex dimension of the manifold, and Fino-Vezzoni Conjecture (\cite{FV15, FV16}) states  that any compact complex manifold admitting both a balanced metric and a pluriclosed metric must admit a K\"ahler metric. 

Recall that for any  $2$-step solvable ${\mathfrak g}$ equipped with a complex structure $J$, Freibert and Swann introduced the following terminology: ${\mathfrak g}$  is said to be:
\begin{itemize} 
\item Pure type I: if  $J{\mathfrak g}'\cap {\mathfrak g}'=0$,
\item Pure type II: if $J{\mathfrak g}'={\mathfrak g}'$,
\item Pure type III: if $J{\mathfrak g}'+{\mathfrak g}'={\mathfrak g}$.
\end{itemize}
Let $W$ be a $J$-invariant complement subspace of ${\mathfrak g}' + J{\mathfrak g}'$ in ${\mathfrak g}$. Let $V$ be a complement subspace of ${\mathfrak g}'\cap J {\mathfrak g}'$ in ${\mathfrak g}'$, then we have the decomposition of ${\mathfrak g}$ into the direct sum of three $J$-invariant subspaces:
\begin{equation} \label{eq:sumofthree}
 {\mathfrak g} = \big( {\mathfrak g}'\cap J {\mathfrak g}' \big) \oplus \big( V \oplus JV\big) \oplus W, 
 \end{equation}
where the sum of the first two terms is ${\mathfrak g}' + J{\mathfrak g}'$. In terms of the decomposition, ${\mathfrak g}$ if of pure types if and only if at least one of the three summands vanishes. (To be more precise, ${\mathfrak g}$ if of pure type I, II, or III if the first, second, or third summand in (\ref{eq:sumofthree}) vanishes, respectively). Clearly the types are independent of the choice of $V$ and $W$. Note that pure type II does not intersect pure type I or III, while pure type I and pure type III have an overlap. Obviously, $2$-step solvable Lie algebras of pure type II constitute a special case of those in Theorem \ref{thm}, so for any $2$-step solvable Hermitian Lie algebra of pure type II, its holomorphic sectional curvature is constant if and only if it is Chern flat. In the following let us discuss a typical example.

\begin{example}[{\bf 1}]
Consider the Heisenberg Lie algebra ${\mathfrak h}_{\sigma} = U\oplus {\mathbb R}$, where $\sigma$ is a non-degenerate skew-symmetric bilinear form on $U\cong {\mathbb R}^{2r}$, and the Lie bracket is defined by 
$$[(v_1, s_1), (v_2, s_2)] = (0, \sigma (v_1, v_2)). $$ 
Equivalently, it can be described as ${\mathfrak h}_{\sigma}={\mathbb R}\{ X_1, \ldots ,X_r, Y_1, \ldots , Y_r, Z\}$ where the only non-trivial Lie brackets are given by
$$ [X_i, Z] = a_iX_i + b_i Y_i, \ \ \ [Y_i, Z] = -b_iX_i + a_i Y_i, \ \ \ 1\leq i\leq r. $$
Here we assumed that $\lambda_i = -(b_i + \sqrt{-1}a_i) \neq 0$ for each $i$. Consider the Lie algebra ${\mathfrak h}= {\mathfrak h}_{\sigma}\oplus {\mathbb R}$ equipped with an almost complex structure $J$ given by
$$ JW=Z, \ \ \ JZ=-W, \ \ \ JX_i=Y_i, \ \ \ JY_i = - X_i, \ \ \ 1\leq i\leq r, $$
where $W$ is the unit of the ${\mathbb R}$ factor. Then it is easy to see that $J$ is integrable, and the commutator ${\mathfrak h}'=[{\mathfrak h}, {\mathfrak h}]=U$ is abelian and $J$-invariant, thus ${\mathfrak h}$ is $2$-step solvable of pure type II. 
\end{example}

Let $g$ be the metric on ${\mathfrak h}$ compatible with $J$ so that  $\{ X_1, Y_1, \ldots , X_r, Y_r, W, Z\}$ is orthonormal. Write
$$ e_i=\frac{1}{\sqrt{2}} (X_i - \sqrt{-1}Y_i), \ \ (1\leq i\leq r), \ \ \ \ \  e_0= \frac{1}{\sqrt{2}} (W-\sqrt{-1}Z). $$
Then $\{ e_1, \ldots , e_r, e_0\}$ becomes a unitary frame, and we compute 
$$ [e_i, W]=0, \ \ \ [e_i,Z] = (a_i+\sqrt{-1}b_i)e_i = - \overline{\lambda}_i e_i, \ \ \ \forall \ 1\leq i\leq r. $$
Therefore the following are all the non-trivial components of the structure constants under $e$:
$$ C^i_{i0}= -\overline{\lambda}_i, \ \ \ D^i_{i0}=\lambda_i, \ \ \ \mbox{where} \ \ \lambda_i = -(b_i+\sqrt{-1}a_i) \neq 0, \ \ 1\leq i\leq r. $$
Denote by $\{ \varphi_1, \ldots , \varphi_r, \varphi_0\}$ the coframe dual to $e$, then the structure equation becomes
$$ d\varphi_0=0, \ \ \ d\varphi_i = \overline{\lambda}_i \varphi_i \wedge (\varphi_0 - \overline{\varphi}_0), \ \ \ 1\leq i\leq r. $$
By a straight-forward computation, one can verify that the above $({\mathfrak h}, J,g)$ is indeed Chern flat. 

\vspace{0.2cm}

Next, let us examine those $({\mathfrak g}, J,g)$ in Theorem \ref{thm} that are Chern flat. Let $e$ be an admissible frame of ${\mathfrak g}$. From the discussion in the previous section, we know that $C^{\alpha}_{\ast\ast}=D^{\alpha}_{\ast\ast} = D^{\ast}_{\alpha \ast}=0$ for any $r<\alpha \leq n$. For any $1\leq i,j\leq r$ and any $r<\alpha ,\beta\leq n$, by (\ref{curvature}) we have
$$ R_{\alpha \bar{\beta}i\bar{j}} = \sum_{s=1}^r \big( D^s_{i\alpha} \overline{D^s_{j\beta} } - D^j_{s\alpha} \overline{D^i_{s\beta} } \big) . $$
Therefore the vanishing of $R$ implies that $[D_{\alpha}, D^{\ast}_{\beta}]=0$, where $D_{\alpha} = (D^j_{i\alpha})$ is the $r\times r$ matrix, and $A^{\ast}$ denotes the transpose conjugation of $A$. By the middle line in (\ref{Bianchi}), we get $[D_{\alpha}, D_{\beta}]=0$ for any $r<\alpha, \beta \leq n$. So these $D_{\alpha}$ are commuting normal matrices, thus can be simultaneously diagonalized, namely, by a unitary change of $\{ e_1, \ldots , e_r\}$ if necessary, we may assume that
\begin{equation}  \label{eq:Y}
 D^j_{i\alpha }=Y_{i\alpha} \delta_{ij}, \ \ \ \ \ \ \ \forall \ 1\leq i,j\leq r, \ \ \forall \ r<\alpha \leq n. 
 \end{equation}
Note that due to the frame change here, (\ref{eq:res2}) may no longer hold. The third line of (\ref{Bianchi}) gives us
\begin{equation}  \label{eq:CY}
 \overline{Y_{i\alpha}} \,C^i_{\beta\gamma} = (\overline{Y_{i\alpha}} - \overline{Y_{j\alpha}})\, C^j_{i\beta} = (\overline{Y_{j\alpha}} - \overline{Y_{i\alpha}} - \overline{Y_{k\alpha}}) \,C^j_{ik} =0,  
 \end{equation}
for any $ 1\leq i,j,k\leq r$ and any $ r<\alpha, \beta, \gamma \leq n$. Let us rearrange the order of $\{ e_1, \ldots , e_r\}$ so that equal rows of $Y$ are put into the same blocks, namely, write $\{ 1, \ldots , r\}$ as disjoint union $I_0\cup I_1 \cup \cdots \cup I_{\ell}$, where 
\begin{eqnarray*}
&&  I_0=\{ 1, \ldots , r_0\},\  \  \mbox{with}  \ 0\leq r_0\leq r; \ \  \mbox{and} \ I_1=\{ r_0\!+\!1, \ldots , r_1\};\\
&&  I_2=\{ r_1\!+\!1, \ldots , r_2\}; \ \ldots \ldots ; \ I_{\ell}=\{r_{\ell\!-\!1}\!+\!1, \ldots , r_{\ell}\}, \ \mbox{where} \ r_{\ell}=r,
\end{eqnarray*}
so that for each $j$,  if $i,k\in I_j$, then $Y_{i\alpha}=Y_{k\alpha}$ for any $\alpha$ and vice versa. Also, $I_0$ corresponds to the zero rows in $Y$. Under this order, (\ref{eq:CY}) implies that for any $r<\alpha, \beta \leq n$ we have
\begin{equation}  \label{eq:CY2}
 C^i_{\alpha \beta}=0 \ \mbox{unless}\ i\in I_0; \ \mbox{while each} \ r\times r \ \mbox{matrix} \ C_{\alpha} =(C^j_{i\alpha})\ \mbox{is block-diagonal.} 
 \end{equation}
Write ${\mathfrak a}={\mathfrak g}/{\mathfrak g}'$ for the abelian Lie algebra of dimension $2(n-r)$, then ${\mathfrak g}$ is the extension of the nilpotent complex Lie algebra ${\mathfrak g}'$ by the abelian Lie algebra ${\mathfrak a}$
$$ 0 \rightarrow {\mathfrak g}' \rightarrow {\mathfrak g} \rightarrow {\mathfrak a}  \rightarrow  0, $$
satisfying (\ref{eq:Y}), (\ref{eq:CY}), and the first line of (\ref{Bianchi}).  

\begin{example}[{\bf 2}]
When $r_0=r$, or equivalently $Y=0$, then ${\mathfrak g}$ is a (solvable) complex Lie algebra. In this case it is well-known that any metric compatiblle with the complex structure is Chern flat.
\end{example}

\begin{example}[{\bf 3}]
When $C^j_{ik}=0$ for all $1\leq i,j,k\leq r$, or equivalently when ${\mathfrak g}'$ is abelian, then ${\mathfrak g}$ is $2$-step solvable of pure type II. In this case the first line of (\ref{Bianchi}) means (\ref{eq:CY2}) and 
$$[C_{\alpha}, C_{\beta}]=0, \ \ \ \ \,^t\!C_{\alpha} w_{\beta\gamma} + \,^t\!C_{\beta} w_{\gamma\alpha}+\,^t\!C_{\gamma} w_{\alpha\beta} =0, \ \ \ \ \ \forall \ r<\alpha, \beta , \gamma \leq n, $$ 
where $w_{\alpha\beta}$ is the column vector in ${\mathbb C}^r$ with entries $C^i_{\alpha\beta}$. 
\end{example}

In the following, let us give an explicit example of the above kind, when the only possibly non-zero components of $C$ are $C^i_{i\alpha}$:

\begin{example}[{\bf 3}]
Fix any integers $1\leq r<n$. Let $V\cong {\mathbb R}^{2n}$ be a vector space and $\{ \varepsilon_1, \ldots , \varepsilon_{2n}\}$ be a basis of $V$. Let $g=\langle ,\rangle$ be the metric on $V$ so that $\varepsilon$ is orthonormal, and $J$ the almost complex structure on $V$ so that $J\varepsilon_i = \varepsilon_{n+i}=\varepsilon_{i^{\ast}}$ for $1\leq i\leq n$. For any $1\leq i\leq r$ and any $r<\alpha \leq n$, let $x_{i\alpha}$, $y_{i\alpha}$, $u_{i\alpha}$, $v_{i\alpha}$ be constants and define Lie brackets 
\begin{eqnarray*}
&&  [\varepsilon_i, \varepsilon_{\alpha}] = x_{i\alpha} \varepsilon_i + y_{i\alpha} \varepsilon_{i^{\ast}},  \ \ \ \ \ \ [\varepsilon_{i^{\ast}}, \varepsilon_{\alpha}] = -y_{i\alpha} \varepsilon_i + x_{i\alpha} \varepsilon_{i^{\ast}}, \\
&& [\varepsilon_i, \varepsilon_{\alpha^{\ast}}] = v_{i\alpha} \varepsilon_i -u_{i\alpha} \varepsilon_{i^{\ast}},  \ \ \ \ \ [\varepsilon_{i^{\ast}}, \varepsilon_{\alpha^{\ast}}] = -u_{i\alpha} \varepsilon_i + v_{i\alpha} \varepsilon_{i^{\ast}},
\end{eqnarray*}
while all other Lie brackets are zero. Then it is easy to verify that $V$ becomes a Lie algebra ${\mathfrak g}$ and $J$ is integrable. It is also easy to see that ${\mathfrak g}$ is $2$-step solvable of pure type II and $g$ is Chern flat.
\end{example}

In fact, if we let $e_i=\frac{1}{\sqrt{2}}(\varepsilon_i - \sqrt{-1}\varepsilon_{i^{\ast}})$ for $1\leq i\leq n$, then in terms of the unitary frame $e$ the only possibly non-zero Lie brackets are
$$ [e_i, e_{\alpha}] = Z_{i\alpha} e_i, \ \ \ \ [\overline{e}_i, e_{\alpha}] = Y_{i\alpha} \overline{e}_i, \ \ \ \ \ \ 1\leq i\leq r, \ r<\alpha \leq n, $$
where $Y=\frac{1}{\sqrt{2}}\{ (x+u)-\sqrt{-1}(y+v)\}$ and $Z=\frac{1}{\sqrt{2}}\{ (x-u)+\sqrt{-1}(y-v)\}$ for each $i$ and $\alpha$. As long as for each $i$, there will be some $\alpha$ such that $(Y_{i\alpha}, Z_{i\alpha})\neq (0,0)$, then the commutator ${\mathfrak g}'$ will be the entire $\mbox{span}\{ \varepsilon_1, \ldots , \varepsilon_r, \varepsilon_{1^{\ast}}, \ldots , \varepsilon_{r^{\ast}}\}$. 

Finally, we remark that for solvable Lie algebras with complex commutators that are Chern flat, the majority of them are not $2$-step solvable. For instance, when there are some indices $1\leq i,j,k\leq r$ so that $C^j_{ik}\neq 0$, then the Lie algebra is not $2$-step solvable. There are plenty of such Hermitian Lie algebras, and here let us give an explicit example of such kind:

\begin{example}[{\bf 4}]
Consider the $6$-dimensional Lie algebra ${\mathfrak g}$ equipped with a Hermitian structure $(J,g)$, where under some unitary frame $e$ the only non-trivial Lie brackets are
$$ [e_1, e_2]=e_1;\ \ [e_i,e_3]=Z_ie_i,\ \ [\overline{e}_i,e_3]=Y_i\overline{e}_i, \ i=1,2. $$
Assume that $(Y_1,Z_1)\neq (0,0)$ and $(Y_2,Z_2)\neq (0,0)$. Then ${\mathfrak g}$ is solvable, $J{\mathfrak g}'={\mathfrak g}'$, and $g$ is Chern flat. It is clear that ${\mathfrak g}'$ is not abelian, so ${\mathfrak g}$ is not $2$-step solvable. It is $3$-step solvable due to the low dimension here. 
\end{example}
 
It terms of the unitary coframe $\varphi$  dual to $e$, the above Lie bracket definition can be equivalently described by the (first) structure equation
\begin{equation*}
  d\varphi_1 = -\varphi_1\wedge \varphi_2 -\varphi_1\wedge (Z_1\varphi_3+\overline{Y}_1\overline{\varphi}_3), \ \   d\varphi_2 =  -\varphi_1\wedge (Z_2\varphi_3+\overline{Y}_2\overline{\varphi}_3), \ \   d\varphi_3 = 0. 
\end{equation*}
Since under any unitary frame $e$, the connection matrix $\theta$ for the Chern connection is given by 
$$ \theta_{ij}= \sum_{k=1}^n  \big(  D^j_{ik} \varphi_k - \overline{D^i_{jk}} \,\overline{\varphi}_k \big), $$ 
we know that in the case of the above example we have 
$$ \theta = \left[ \begin{array}{ccc} \theta_{11} & 0 & 0 \\     0 & \theta_{22} & 0 \\ 0 & 0 & 0 \end{array}  \right] , \ \ \ \theta_{ii} = Y_i\varphi_3-\overline{Y}_i\overline{\varphi}_3, \ \ i=1,2. $$
Therefore the curvature matrix under $e$ is $\Theta = d\theta -\theta \theta = d\theta = 0$, hence the metric is Chern flat.

\vspace{0.3cm}

\vs

\noindent\textbf{Acknowledgments.} The second named author would like to thank Haojie Chen, Xiaolan Nie, Sheng Rao, Kai Tang, Bo Yang, Xiaokui Yang, and Quanting Zhao for their interests and/or helpful discussions. We are very grateful to the anonymous referee for a number of highly valuable suggestions and corrections, which clarified things and improved the readability of the article. The authors are ranked in alphabetic order of their names and should be treated equally.

\vs


\begin{thebibliography}{99}

\bibitem {AI} B. Alexandrov and S. Ivanov,  \emph{Vanishing theorems on Hermitian manifolds,} Diff. Geom. Appl. {\bf 14} (2001),  251-265.







       


\bibitem  {AT} D. Angella and A. Tomassini, \emph{On the $\partial \overline{\partial}$-lemma and Bott-Chern cohomology,}  Invent. Math. {\bf 192} (2013), 71-81.

\bibitem  {AU}  D. Angella and L. Ugarte,   \emph{Locally conformal Hermitian metrics on complex non-K\"ahler manifolds,} Mediterr. J. Math. {\bf 13} (2016), 2105-2145.




\bibitem {ADM} V. Apostolov, J. Davidov, and O. Muskarov, \emph{Compact self-dual Hermitian surfaces,}  Trans Amer Math Soc, {\bf 348} (1996), 3051-3063.








\bibitem {Balas} A. Balas, \emph{Compact Hermitian manifolds of constant holomorphic sectional curvature,} Math Z, {\bf 189} (1985), 193-210.

\bibitem {BG} A. Balas and P. Gauduchon, \emph{Any Hermitian metric of constant nonpositive (Hermitian) holomorphic sectional curvature on a  compact complex surface is K\"ahler,} Math. Z. {\bf 190} (1985), 39-43.








\bibitem {Boothby} W. Boothby, \emph{Hermitian manifolds with zero curvature,} Michigan Math. J. {\bf 5} (1958), no.\,2, 229-233.






\bibitem {CCN} H. Chen, L. Chen, and X. Nie, \emph{Chern-Ricci curvatures, holomorphic sectional curvature and  Hermitian metrics,}  Sci. China Math {\bf 64} (2021), 763-780.


\bibitem {CN} H. Chen and X. Nie, \emph{Compact Hermitian surfaces with pointwise constant Gauduchon holomorphic sectional curvature,}  Math. Zeit. {\bf 302} (2022), 1721-1737.

\bibitem {ChenZ} S. Chen and F. Zheng, \emph{On Strominger space forms,}  J. Geom. Anal. {\bf 32} (2022), no.\,4, Paper No.\,141, 21pp.


\bibitem {ChenZ1} S. Chen and F. Zheng, \emph{Bismut torsion parallel manifolds with constant holomorphic sectional curvature,} arXiv: 2405.09110.









\bibitem{CFGU} L. Cordero, M. Fern\'{a}ndez, A. Gray, L. Ugarte, \emph{Compact nilmanifolds with nilpotent complex structures: Dolbeault cohomology}, Trans. Amer. Math. Soc. {\bf 352} (2000), no.\,12, 5405-5433.















\bibitem{FP3} A. Fino and F. Paradiso, \emph{Hermitian structures on a class of almost nilpotent solvmanifolds,} J. Algebra {\bf 609} (2022), 861-925.









\bibitem {FinoTomassini09} A. Fino and A. Tomassini, \emph{Non K\"ahler solvmanifolds with generalized K\"ahler structure,} J. Symplectic Geom. {\bf 7} (2009), no.\,2, 1–14.






\bibitem {FV15}  A. Fino and L. Vezzoni, \emph{Special Hermitian metrics on compact solvmanifolds,} J. Geom. Phys. {\bf 91} (2015),
40-53.


\bibitem {FV16}  A. Fino, and L. Vezzoni, \emph{On the existence of balanced and SKT metrics on nilmanifolds,} Proc. Amer. Math. Soc. {\bf 144} (2016), no.\,6, 2455-2459.


\bibitem {FSwann}  M. Freibert, A. Swann, \emph{Two-step solvable SKT shears,}  Math. Zeit. {\bf 299} (2021), 1703-1739.



\bibitem {FSwann2}  M. Freibert, A. Swann, \emph{Compatibility of balanced and SKT metrics on two-step solvable Lie groups,} Transform. Groups (Mar 2023)  https://doi.org/10.1007/s00031-023-09796-2


\bibitem {Fu} J-X Fu,  {\em On non-K\"ahler Calabi-Yau threefolds with balanced metrics,} Proceedings of the International Congress of Mathematicians. Volume II, 705-716, Hindustan Book Agency, New Delhi, 2010.









\bibitem {GiustiPodesta} F. Giusti and F. Podest\`a, \emph{Real semisimple Lie groups and balanced metrics,} Rev. Mat. Iberoam. {\bf 39} (2023), no.\,2, 711-729.






\bibitem {GuoZ2} Y. Guo and F. Zheng, \emph{Streets-Tian Conjecture on several special types of Hermitian manifolds,} arXiv:2409. 09425













\bibitem{LZ} Y. Li and F. Zheng, \emph{Complex nilmanifolds with constant holomorphic sectional curvature,}  Proc. Amer. Math. Soc. {\bf 150} (2022), 319-326.












\bibitem{RZ} P. Rao and F. Zheng, \emph{ Pluriclosed manifolds with constant holomorphic sectional curvature,} Acta. Math. Sinica (English Series). \textbf{38} (2022), no.\,6,  1094-1104



\bibitem {Salamon} S. Salamon, \emph{Complex structures on nilpotent Lie algebras,} J. Pure Appl. Algebra {\bf 157} (2001), 311-333.





\bibitem {SatoSeki} T. Sato, K. Sekigawa, \emph{Hermitian surfaces of constant holomorphic sectional curvature,} Math. Zeit., {\bf 205} (1990), 659-668.












\bibitem {STW} G. Sz\'ekelyhidi, V. Tosatti and B. Weinkove, \emph {Gauduchon metrics with prescribed volume form,}  Acta Math. {\bf 219} (2017), no.\,1, 181-211.



\bibitem {Tang} K. Tang, \emph{Holomorphic sectional curvature and K\"ahler-like metric} (in Chinese),  SCIENTIA SINICA Mathematica {\bf 50} (2020), 1-12.




\bibitem{Tosatti} V. Tosatti, \emph{Non-K\"ahler Calabi-Yau manifolds,} in Analysis, complex geometry, and mathematical physics: in honor of Duong H. Phong, 261-277, Contemp. Math. 644, Amer. Math. Soc., Providence, RI, 2015




\bibitem  {Ugarte} L. Ugarte, \emph{Hermitian structures on six-dimensional nilmanifolds,} Transform. Groups {\bf 12} (2007), 175-202.





\bibitem {VYZ} L. Vezzoni, B. Yang, and F. Zheng, \emph{ Lie groups with flat Gauduchon connections,} Math. Zeit. \textbf{293} (2019), Issue 1-2, 597-608.


\bibitem {WYZ} Q. Wang, B. Yang, and F. Zheng, \emph{On Bismut flat manifolds,} Trans. Amer. Math. Soc. {\bf 373} (2020), 5747-5772.

\bibitem {YangZ} B. Yang and F. Zheng, \emph{On curvature tensors of Hermitian manifolds,} Comm. Anal. Geom. {\bf 26} (2018), no.\,5, 1193-1220.


\bibitem  {XYangZ} X. Yang and F. Zheng, {\em On real bisectional curvature for Hermitian manifolds,} Trans. Amer. Math. Soc., {\bf 371} (2019),  2703-2718.






\bibitem {ZZ-Crelle} Q. Zhao and F. Zheng, \emph{Strominger connection and pluriclosed metrics,}  J. Reine Angew. Math. (Crelles) {\bf 796} (2023), 245-267.

\bibitem {ZZ-JGP} Q. Zhao and F. Zheng, \emph{Complex nilmanifolds and K\"ahler-like connections,} J. Geom. Phys. {\bf 146} (2019), 103512, 9pp.










\bibitem {ZhouZ}  W. Zhou and F. Zheng, \emph{ Hermitian threefolds with vanishing real bisectional curvature} (in Chinese), Scientia Sinica Mathematica {\bf 52} (2022), no.\,7, 757-764. English version: arXiv: 2103.04296
    
    
\end{thebibliography}
\end{document}